\theoremstyle{plain}
\newtheorem{theorem}{Theorem}[section]
\newtheorem{proposition}[theorem]{Proposition}
\theoremstyle{definition}
\newtheorem{example}[theorem]{Example}
\newtheorem{remark}[theorem]{Remark}
\newtheorem{challenge}[theorem]{Challenge}
\DeclareMathOperator{\Spec}{Spec}
\DeclareMathOperator{\Rep}{Rep}
\DeclareMathOperator{\A}{\mathbb{A}}
\DeclareMathOperator{\cP}{\mathcal{P}}
\let\@wraptoccontribs\wraptoccontribs
\author{Nsibiet E. Udo}
\address{University of Calabar, Nigeria}
\email{nsibietudo@unical.edu.ng}
\author{Praise Adeyemo}
\address{University of Ibadan, Nigeria}
\email{ph.adeyemo@ui.edu.ng}
\author{Bal\'{a}zs Szendr\H{o}i}
\address{University of Vienna, Austria}
\email{balazs.szendroi@univie.ac.at}
\address{University of Ioannina, Greece}
\email{spapadak@uoi.gr}
\title[Ideals, representations and a symmetrisedg Bernoulli triangle]{Ideals, representations and \\ a symmetrised Bernoulli triangle}
\date{July 2024}
\subjclass[2020]{Primary 13A15; Secondary 05E10,20C30}
\keywords{zero-dimensional ideal, representations of the symmetric groups, Bernoulli's triangle}
\begin{document}

\begin{abstract} We study some representations of symmetric groups arising from a certain ideal in the coordinate ring of affine $n$-space. Our results give graded and representation-theoretic enhancements of the sequence of numbers $c_n=1 + (n-2)\cdot 2^{n-1}$, sequence 337 of the Online Encyclopaedia of Integer Sequences, involving a symmetric version of the Bernoulli triangle.  
\end{abstract}

\maketitle

\section*{Introduction}

In this paper, we study some natural representations of symmetric groups arising from the binomial ideal 
\[ I_n = \langle x_2x_3 \cdots x_n - x_1, x_1x_3 \cdots x_n - x_2, \ldots, x_1x_2 \cdots x_{n-1} - x_n \rangle \lhd R_n \] 
of the free polynomial algebra $R_n=F[x_1,\ldots, x_n]$ over a field $F$, assumed algebraically closed of characteristic zero. The ideal $I_n$ is inhomogeneous, $S_n$-invariant, radical, and of finite codimension 
$c_n=1 + (n-2)\cdot 2^{n-1}$ (Proposition~\ref{propIn}), a quantity that arises in various corners of combinatorics and combinatorial topology~\cite{OEIS_337}. Our first result, Theorem~\ref{thm1}, computes the $S_n$-character of the quotient $R_n/I_n$, giving a representation-theoretic refinement of the formula defining~$c_n$. We then consider a certain monomial specialisation $J_n\lhd R_n$ of $I_n$. Our second result, Theorem~\ref{thm2}, computes the Hilbert series of the corresponding Artinian quotient $R_n/J_n$ in terms of partial sums of binomial coefficients. The coefficients of the Hilbert series can be seen as one row of what we call here the symmetrised Bernoulli triangle. As a biproduct of the proof, we discover that the ideal $J_n$ is, somewhat surprisingly, invariant under a different symmetric group $S_{n-1}$. Our next result, Theorem~\ref{thm3}, gives another representation-theoretic refinement of the quantity $c_n$, now also involving the grading. We finally turn to an intermediate homogeneous specialisation $K_n$ of $I_n$. We show in Theorem~\ref{thmG} that the corresponding graded Artinian quotient $R_n/K_n$, as opposed to $R_n/J_n$, is Gorenstein, explaining the symmetry of the Hilbert function in Theorem~\ref{thm2}. We conclude with a question about the graded character of this quotient. 

The combinatorics of the symmetrised Bernoulli triangle is discussed in Section~\ref{sec1}.
The inhomogeneous ideal $I_n$ is discussed in Section~\ref{sec2}, the monomial specialisation $J_n$ is the subject of Section~\ref{sec3}, while Section~\ref{sec4} studies the homogeneous ideal $K_n$. The proof of the key Gorenstein property is the subject of the Appendix.

\vspace{0.1in}

\noindent{\bf Conventions } We work over an algebraically closed field $F$ of characteristic $0$. For $n\geq 1$, we denote the ring of finite-dimensional representations of the symmetric group $S_n$ over~$F$ by $\Rep(S_n)$, with unit $[1]\in \Rep(S_n)$. For $V$ a finite-dimensional $S_n$-representation, we denote by 
$\chi(V)\in \Rep(S_n)$ the corresponding ring element. If $V=\oplus_r V_r$ is a graded finite-dimensional $S_n$-representation, then let 
$\chi_t(V) = \sum_r \chi(V_r)t^r\in \Rep(S_n)[t]$.
We denote by $\cP(n)$ the set of all subsets of $\{1,\ldots,n\}$ and $[\cP(n)]\in \Rep(S_n)$ the corresponding finite-dimensional permutation representation, of dimension $2^n$. Further, for $0\leq k\leq n$, $\cP_k(n)$ will denote the set of all $k$-element subsets, and $[\cP_k(n)]\in \Rep(S_n)$ the corresponding finite-dimensional permutation representation, of dimension $n \choose k$. If $n$ is odd, then the set of all even-element subsets of 
$\{1,\ldots,n\}$ gives an isomorphic representation to the set of all odd-element subsets; we denote by $\frac{1}{2}[\cP(n)]\in \Rep(S_n)$ this representation, of dimension $2^{n-1}$. 

\vspace{0.1in}

\noindent{\bf Acknowledgements } The authors would like to thank Livia Campo, Miles Reid, and especially Joachim Jelisiejew for comments, and Stavros Argyrios Papadakis for contributing the Appendix.  N.E.U. would also like to thank Dominic Joyce for his mentorship on a related project. 

\section{A symmetrised Bernoulli triangle}\label{sec1}

For $n\geq 0$ and $0\leq k\leq n$ integers, partial sums of binomial coefficients 
\[ b_{n,k} = \sum_{j=0}^{k} {n \choose j}
\]
form a triangular array of positive integers sometimes called the {\em Bernoulli triangle}. The edge terms are $b_{n,0}=1$ and $b_{n,n}=2^n$, but otherwise this triangle can also be defined by the basic recursion
\begin{equation}\label{recursion}
     b_{n,k} = b_{n-1,k-1} + b_{n-1,k}
\end{equation}
defining the Pascal triangle. 

We will be interested in a symmetrized, re-indexed version of the Bernoulli triangle. Define for $n\geq 2$ and $0\leq k \leq 2n-4$ the following set of positive integers: 
\[ a_{n,k} = \left\{ \begin{array}{ll} b_{n-1,k}  & \mbox{ if } 0\leq k \leq n-2, \\  b_{n-1,2n-4-k}  & \mbox{ if } n-2\leq k \leq 2n-4.\end{array} \right.
\]
We have in particular $a_{n,0} =a_{n,2n-4} = 1$, and the sequence $\{a_{n,k} : 0\leq k \leq 2n-4\}$ is symmetric and strictly increasing until its middle term $a_{n,n-2} = b_{n-1,n-2} = 2^{n-1}-1$. Here are the rows of our triangle for $2\leq n \leq 6$: 

\[\begin{array}{ccccccccc} &&&& 1\\ &&& 1 & 3 & 1\\ && 1 & 4 & 7 & 4 & 1  \\ & 1 & 5 & 11 & 15 & 11 & 5  &1  \\  1 & 6 & 16 & 26 & 31 &  26 & 16 & 6  &1\end{array}\] 

\vspace{.1in}

The row sums $\{1,5,17,49,129, \ldots\}$ of our triangle turn out to be numbers arising elsewhere~\cite{OEIS_337}. 
\begin{proposition} We have
\[\sum_{k=0}^{2n-4}a_{n,k} = 1 + (n-2)\cdot 2^{n-1}. \]
\end{proposition}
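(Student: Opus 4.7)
The plan is to exploit the built-in symmetry $a_{n,k}=a_{n,2n-4-k}$ and then reduce to two standard binomial identities. First I would fold the sum around its middle term. For $k\in\{0,\ldots,n-3\}$ we have $a_{n,k}=b_{n-1,k}$, for $k=n-2$ we have $a_{n,n-2}=b_{n-1,n-2}$, and for $k\in\{n-1,\ldots,2n-4\}$ substituting $k'=2n-4-k$ gives $a_{n,k}=b_{n-1,k'}$ with $k'$ running over $\{0,\ldots,n-3\}$. Adding the three pieces yields
\[ \sum_{k=0}^{2n-4} a_{n,k} \;=\; 2\sum_{k=0}^{n-2} b_{n-1,k} \;-\; b_{n-1,n-2}. \]

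Next I would evaluate $S=\sum_{k=0}^{n-2} b_{n-1,k}$ in closed form by swapping the order of summation. Since $\binom{n-1}{j}$ contributes to $b_{n-1,k}$ precisely for $j\leq k\leq n-2$, we get
\[ S \;=\; \sum_{j=0}^{n-2}(n-1-j)\binom{n-1}{j}. \]
The $j=n-1$ term vanishes from the factor $(n-1-j)$, so the sum extends freely to $j=n-1$; using $\sum_{j=0}^{n-1}\binom{n-1}{j}=2^{n-1}$ and $\sum_{j=0}^{n-1}j\binom{n-1}{j}=(n-1)2^{n-2}$ gives $S=(n-1)2^{n-1}-(n-1)2^{n-2}=(n-1)2^{n-2}$.

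Finally, since $b_{n-1,n-2}=2^{n-1}-\binom{n-1}{n-1}=2^{n-1}-1$, substituting back yields
\[ \sum_{k=0}^{2n-4} a_{n,k} \;=\; 2(n-1)2^{n-2} - (2^{n-1}-1) \;=\; (n-2)\cdot 2^{n-1} + 1, \]
as required. The only real care point is bookkeeping: ensuring that the middle term $k=n-2$ is counted exactly once when folding, since it lies in both halves of the piecewise definition. An induction on $n$ via the Pascal-type recursion $b_{n,k}=b_{n-1,k-1}+b_{n-1,k}$ is an equally viable alternative, but the direct evaluation above is shorter and produces the closed-form answer in one pass.
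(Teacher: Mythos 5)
Your proof is correct, and it takes a genuinely different (and more explicit) route than the paper, which disposes of the proposition in one line by induction on $n$ using the Pascal-type recursion $b_{n,k}=b_{n-1,k-1}+b_{n-1,k}$. Your folding step is right — the two off-centre halves each contribute $\sum_{k=0}^{n-3}b_{n-1,k}$ and the middle term $b_{n-1,n-2}$ is counted once, giving $2S-b_{n-1,n-2}$ — and the interchange of summation, the evaluation $S=(n-1)2^{n-2}$ via $\sum_j\binom{n-1}{j}=2^{n-1}$ and $\sum_j j\binom{n-1}{j}=(n-1)2^{n-2}$, and the final substitution are all accurate. What your approach buys is a closed form with no appeal to the recursion, and as a bonus it essentially re-derives the paper's identity~\eqref{identity}: writing $2S-b_{n-1,n-2}=\sum_{j=0}^{n-2}\bigl(2(n-2-j)+1\bigr)\binom{n-1}{j}$ is exactly that identity after reindexing, which the paper states separately "for further reference" and uses later in the proof of Proposition~\ref{prop}. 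The paper's induction is shorter to state but conceals this connection; your version makes it visible at the cost of two standard binomial evaluations.
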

\begin{proof} This follows by induction using the recursion~\eqref{recursion}.
\end{proof}

For further reference, we re-write this last formula also in the form
\begin{equation} \label{identity}
\sum_{j=0}^{n-2} (2j+1) {n-1 \choose n-2-j} =1 + (n-2)\cdot 2^{n-1}. 
\end{equation}

\section{The inhomogeneous ideal}\label{sec2}

For $n\geq 3$, consider the inhomogeneous, binomial~\cite{binom} ideal
\[ I_n = \langle x_2x_3 \cdots x_n - x_1, x_1x_3 \cdots x_n - x_2, \ldots, x_1x_2 \cdots x_{n-1} - x_n \rangle \lhd R_n. \] 
The $n=2$ case is degenerate and not particularly exciting; it will fit with all our results if we set 
\[ I_2 = \langle x_1, x_2\rangle \lhd F[x_1, x_2]. \]
For $n=3$, the ideal 
\[I_3 = \langle x_2x_3-x_1, x_1x_3-x_2, x_1x_2-x_3\rangle \lhd F[x_1, x_2, x_3]\] 
appeared in~\cite[Sect.3.1]{scattering}, which inspired our construction. 
\begin{proposition}\label{propIn} For all $n\geq 2$, the ideal $I_n \lhd  R_n$ is a reduced complete intersection ideal, of finite codimension $c_n=1 + (n-2)\cdot 2^{n-1}$. 
\begin{proof} For $n=2$, the statement is clear. For $n\geq 3$, let us determine \[X_n = \Spec R_n/I_n\subset\A^n\] as a set first. 
Treating the ideal generators as equations for $X_n$, we deduce $x_i^2 = p$, where we have set $p=x_1\cdots x_n$, and then $p^n = p^2$. We get either $p=0$, which then implies $x_i=0$, or else $p$ is one of the $(n-2)$-nd roots of unity in the field~$F$. For each different choice of root of unity, we can choose independent square roots for the first $(n-1)$ variables, and then the last one is determined. Hence indeed $\dim X_n=0$, and $X_n$ consists of $1+(n-2)\cdot 2^{n-1}$ points. 

It remains to show that $X_n$ is reduced at each of these points. But the ideal $I_n$ contains 
$x_i - \prod_{j\neq i} x_j$, and so also
\[ x_i - \prod_{j\neq i}\prod_{k\neq j} x_k = x_i(1-p^{n-2}). 
\]
Hence in the primary decomposition of $I_n$, the component at the origin contains each $x_i$, so $X_n$ is reduced at the origin. The argument for the other points is similar. 
\end{proof}
\end{proposition}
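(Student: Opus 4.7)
The plan is to determine the zero locus $X_n = \Spec R_n/I_n \subset \A^n$ set-theoretically, count its geometric points, verify reducedness at each, and then read off the complete intersection property from the dimension count.

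After the degenerate case $n=2$, which is immediate, I start for $n\geq 3$ by manipulating the generators. Multiplying $x_i - \prod_{j\neq i} x_j$ by $x_i$ gives $x_i^2 = p$ on $X_n$, where $p := x_1\cdots x_n$; in particular all the $x_i^2$ coincide, and the product of these $n$ identities yields $p^n = p^2$, i.e.\ $p^2(p^{n-2}-1) = 0$ on $X_n$. This dichotomy splits the zero locus set-theoretically: either $p=0$, which together with $x_i^2 = p$ forces every $x_i$ to vanish, yielding the origin; or $p$ is one of the $n-2$ non-trivial $(n-2)$-nd roots of unity, in which case the $x_i$ are square roots of $p$ subject to the single product constraint $x_1\cdots x_n = p$. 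The resulting free sign choices give $2^{n-1}$ points for each such~$p$, totalling $1 + (n-2)\cdot 2^{n-1} = c_n$ geometric points.

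For reducedness at the origin, I would iterate the substitution $x_i \mapsto \prod_{j\neq i} x_j$ to exhibit the element $x_i(1-p^{n-2}) \in I_n$; localising at the origin, $1-p^{n-2}$ is a unit, so each $x_i$ lies in the localised ideal and the local ring there is a field. At a non-origin point $P = (a_1,\ldots,a_n)$ this trick fails, since $1 - p^{n-2}$ vanishes at $P$. Instead, I would observe that the rescaling $x_i \mapsto x_i/a_i$ is an automorphism of $R_n$ carrying $I_n$ to a scalar multiple of itself and sending $P$ to $(1,\ldots,1)$, reducing reducedness at $P$ to reducedness at $(1,\ldots,1)$. There, the Jacobian of the defining generators equals $2I - J$, with $J$ the all-ones matrix; its eigenvalues are $2$ (with multiplicity $n-1$, on the sum-zero subspace) and $2-n$ (on the all-ones vector), so it is non-singular for $n\geq 3$, and $X_n$ is smooth, hence reduced, at every non-origin point.

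Putting these together, $X_n$ is a reduced zero-dimensional scheme of length exactly $c_n$, which is the codimension statement. The complete intersection property then follows formally: $I_n$ is generated by $n$ elements in the regular $n$-dimensional ring $R_n$ and defines a codimension-$n$ subscheme, so by Krull's height theorem the $n$ generators form a regular sequence. I anticipate the main subtlety to be the reducedness check at the non-origin points, where the clean ideal-theoretic argument used at the origin does not transfer and instead needs to be replaced by the scaling-plus-Jacobian package above.
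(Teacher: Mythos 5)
Your proposal is correct, and for the set-theoretic determination of $X_n$ and the point count it coincides with the paper's argument (deduce $x_i^2=p$, then $p^n=p^2$, split into $p=0$ and $p^{n-2}=1$, count $1+(n-2)\cdot 2^{n-1}$ points). Where you diverge is in the reducedness check away from the origin, and your treatment is actually more complete. The paper shows $x_i(1-p^{n-2})\in I_n$, concludes reducedness at the origin because $1-p^{n-2}$ is a unit there, and then asserts that ``the argument for the other points is similar''---which, as you rightly observe, is not literally true, since $1-p^{n-2}$ vanishes at every non-origin point of $X_n$. Your replacement---the diagonal rescaling $x_i\mapsto a_ix_i$, which multiplies each generator $x_i-\prod_{j\neq i}x_j$ by the unit $a_i$ (using $a_i^2=p$, so $p/a_i=a_i$) and hence preserves $I_n$ while moving $(1,\ldots,1)$ to $P$, followed by the Jacobian computation $2I-J$ with eigenvalues $2$ and $2-n$ at $(1,\ldots,1)$---is a clean and fully rigorous way to finish, at the cost of a slightly longer argument. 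You also make explicit the deduction of the complete intersection property from the height count via unmixedness in the regular ring $R_n$ (strictly speaking this uses the Cohen--Macaulay property of $R_n$ rather than Krull's height theorem alone, which only bounds the height from above), a point the paper leaves implicit. Both approaches buy the same conclusion; yours trades the paper's brevity for a verifiable smoothness certificate at the non-origin points.
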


Consider the standard action of the symmetric group $S_n$ on the algebra $R_n$. The ideal~$I_n$ is clearly invariant under this action, so the Artinian $F$-algebra $R_n/I_n$ has the structure of an $S_n$-module. The following is the main result of this section, giving a representation-theoretic enhancement of the set of numbers $\{ 1 + (n-2)\cdot 2^{n-1} : n\geq 2\}$. 
\begin{theorem}\label{thm1} For $n\geq 2$, we have 
\[ \chi\left(R_n/I_n\right) =  [1] + \frac{n-2}{2} [\cP(n)] \in \Rep(S_n), 
\]
where for $n$ odd, the meaning of $\frac{1}{2}[\cP(n)] \in \Rep(S_n)$ was introduced in the Introduction. 
\end{theorem}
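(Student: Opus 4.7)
Since $I_n$ is radical of finite codimension (Proposition~\ref{propIn}), the $F$-algebra $R_n/I_n$ is reduced and Artinian, so evaluation at points gives an $S_n$-equivariant isomorphism
\[ R_n/I_n \;\cong\; F^{X_n(F)} \]
where $X_n(F)\subset F^n$ is the finite set of closed points of $X_n$. Thus $\chi(R_n/I_n)\in \Rep(S_n)$ is simply the character of the permutation representation of $S_n$ on $X_n(F)$, where $S_n$ acts by permuting coordinates. So the task reduces to decomposing $X_n(F)$ into $S_n$-orbits and identifying each one with a standard representation.

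The plan is to re-use the explicit description of $X_n(F)$ from the proof of Proposition~\ref{propIn}. The origin $\underline{0}$ is a single $S_n$-fixed point contributing the trivial summand $[1]$. All other points satisfy $x_i^2 = p$ for the common value $p=x_1\cdots x_n$ with $p^{n-2}=1$. For a fixed such $p$ and a fixed square root $q$ of $p$, the points over $p$ are of the form $(\epsilon_1 q, \ldots, \epsilon_n q)$ with $\epsilon_i\in\{\pm 1\}$, and the constraint $\prod x_i = p$ becomes $\prod_i \epsilon_i = q^{n-2}\in\{\pm 1\}$. This gives a bijection between the fibre of $X_n(F)$ over $p$ and the set of $S\subseteq\{1,\ldots,n\}$ (where $S=\{i:\epsilon_i=-1\}$) whose size $|S|$ has the parity prescribed by $q^{n-2}$. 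The $S_n$-action, permuting the coordinates of $(x_1,\ldots,x_n)$, corresponds precisely to the standard $S_n$-action on subsets. Hence as an $S_n$-representation, the fibre over $p$ is either $[\cP_{\mathrm{even}}(n)]$ or $[\cP_{\mathrm{odd}}(n)]$.

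Now I would sum over the $n-2$ choices of $p$. Writing $p = \zeta^k$ for $\zeta$ a primitive $(n-2)$-nd root of unity and $0\leq k\leq n-3$, we can take $q=\zeta^{k/(n-2)}\cdot \text{(square root)}$, and the parity of $|S|$ comes out to be controlled by the parity of $k$ when $n$ is even (so exactly $(n-2)/2$ values of $p$ give even subsets and $(n-2)/2$ give odd subsets, summing to $\tfrac{n-2}{2}\bigl([\cP_{\mathrm{even}}(n)]+[\cP_{\mathrm{odd}}(n)]\bigr)=\tfrac{n-2}{2}[\cP(n)]$), whereas when $n$ is odd, $n-2$ is odd, $[\cP_{\mathrm{even}}(n)]\cong[\cP_{\mathrm{odd}}(n)]\cong\tfrac{1}{2}[\cP(n)]$ by complementation, and each $p$ contributes one copy, again totalling $\tfrac{n-2}{2}[\cP(n)]$.

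The only real obstacle is bookkeeping around the $\pm$-sign ambiguity in the choice of square root $q$ and the resulting parity constraint, i.e.\ making sure that the count of $p$'s giving even subsets and odd subsets is done consistently when $n$ is even, and that the identification $[\cP_{\mathrm{even}}(n)]\cong[\cP_{\mathrm{odd}}(n)]$ (via $S\mapsto S^c$) is indeed $S_n$-equivariant when $n$ is odd. Once these parity cases are treated carefully, adding in the origin yields $\chi(R_n/I_n)=[1]+\tfrac{n-2}{2}[\cP(n)]$ as required; as a sanity check, taking dimensions recovers $c_n = 1+(n-2)\cdot 2^{n-1}$.
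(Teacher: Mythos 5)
Your proposal is correct and follows essentially the same route as the paper's own proof: identify $R_n/I_n$ with the permutation representation on the reduced point set $X_n$, peel off the origin as $[1]$, and for each of the $n-2$ values of $p$ identify the fibre with even- or odd-size subsets of $\{1,\dots,n\}$, handling the parity count separately for $n$ even and $n$ odd. The "bookkeeping" you flag is exactly what the paper resolves by choosing a primitive $2(n-2)$-nd root of unity $\xi$ and writing $q=\xi^k$, so that the prescribed parity is that of $k$.
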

\begin{proof} Since the vanishing locus $X_n$ of~$I_n$ is reduced, the representation $R_n/I_n$ is just the 
permutation representation of $S_n$ on the point set $X_n$. The (reduced) origin in $X_n$ corresponds to the trivial representation. To determine the other orbits, fix a primitive $2(n-2)$-nd root of unity
$\xi\in K$. For a point $(x_1,\ldots, x_n) \in X_n$, the possible values of the product $p=\prod_j x_j$ are $p=\xi^{2k}$ for $0\leq k \leq n-3$. For a fixed value $p=\xi^{2k}$, we get $x_j= {\sigma_j}\xi^k$, where $\sigma_j\in\{\pm 1\}$, with the condition 
\[ p=\xi^{2k} = \prod_j x_j = \left(\prod_j \sigma_j\right) \xi^{nk}.
\]
So we get 
\[
\prod_j \sigma_j = \xi^{(n-2)k}.
\]
For even $k$, this product of signs is $+1$, whereas for odd $k$, it is $-1$. 

Suppose first that $n$ itself is odd. Then for each value $0\leq k \leq n-3$, the variables with negative sign $\sigma_j=-1$ correspond to a choice of some subset of $\{1,\ldots, n\}$ of size the same parity as $k$. For fixed $k$ and fixed size, the corresponding points form one orbit of $S_n$. So for each $0\leq k \leq (n-3)$, we get exactly the representation $\frac{1}{2}[\cP(n)]\in \Rep(S_n)$: subsets of $\{1,\ldots, n\}$ of the same parity as $k$. Since there are $(n-2)$ possible values of $k$, we indeed get $(n-2)\cdot \frac{1}{2} [\cP(n)]$ as the contribution from the orbits away from the origin. 

If $n$ is even, it is still true that for fixed $k$, we get subsets of $\{1,\ldots, n\}$ of the same parity as $k$ as orbits. In this case, for even $k$, we get all even element subsets, whereas for odd $k$ we have all odd element subsets. But there are exactly $\frac{n-2}{2}$ even numbers $k$ with $0\leq k \leq n-3$, and the same number of odd numbers. So we simply get $\frac{n-2}{2}$ times the permutation representation on all subsets of $\{1,\ldots, n\}$, hence the result. 
\end{proof}

\begin{example} Let $n=3$, so that our ideal is
\[ I_3 = \langle x_2x_3- x_1, x_1x_3  - x_2, x_1x_2  - x_3 \rangle \lhd F[x_1, x_2, x_3]. \] 
Then we get 
\[ X_3 = \{ (0,0,0), (1,1,1), (-1,-1,1), (-1,1,-1), (1,-1,-1)\}\subset\A^3. 
\]
There are two trivial $S_3$-orbits and one orbit of size $3$. Indeed, in this case
\[[1]+\frac{1}{2}[\cP(3)] = 2\cdot [1]+[\cP_1(3)]\in \Rep(S_3).\]
\end{example}

\begin{remark} In~\cite{OEIS_337} and references listed there, several seemingly different interpretations are given for the sequence of positive integers \[\{ 1 + (n-2)\cdot 2^{n-1}: n\geq 2\} = \{1,5,17,49,129,321,\ldots\}.\] Some of these interpretations are combinatorial, counting the number of elements in certain sets. But it seems to us that none of the sets listed in~\cite{OEIS_337} carries a natural representation of the symmetric group of the appropriate size. Indeed, we have not found representation-theoretic enhancements of this sequence anywhere in the literature. 
\end{remark}

\section{A monomial specialisation}\label{sec3}

Equip the polynomial algebra $R_n$ with the graded reverse lexicographic order, the standard order GRevLex chosen by Macaulay2~\cite{m2}, which we used to discover our results. Let
\[ J_n = \mathop{\mathrm{in}_{\mathrm{GRevLex}}} I_n\lhd R_n
\]
be the corresponding initial ideal, a monomial ideal of $R_n$. As $J_n$ is a flat specialisation of $I_n$, $R_n/J_n$ is a graded Artinian $F$-algebra of finite $F$-dimension $c_n=1 + (n-2)\cdot 2^{n-1}$. 
Consider its Hilbert series
\[h_n(t) = \sum_{k=0}^\infty t^k \dim_F(R_n/J_n)_k .
\]
As $R_n/J_n$ is Artinian, this is in fact a finite sum. 

\begin{theorem} \label{thm2} For $n\geq 2$, the Hilbert series of the graded Artinian $F$-algebra $R_n/J_n$ is 
\[h_n(t) = \sum_{k=0}^{2n-4} t^k a_{n,k}.\]
In other words, the coefficients are given precisely by the corresponding row of the symmetrised Bernoulli triangle. 
\end{theorem}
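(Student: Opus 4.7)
The plan is to describe the initial ideal $J_n$ explicitly by exhibiting elements of $I_n$ whose leading terms cut out a candidate monomial ideal $\tilde J_n\subseteq J_n$, then promoting this containment to equality using the known dimension $\dim_F R_n/I_n=c_n$. (The case $n=2$ is vacuous since $I_2=\langle x_1,x_2\rangle$ directly; I focus on $n\geq 3$.)

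Beyond the leading terms $\mathop{\mathrm{in}}(g_i)=\prod_{j\neq i} x_j$ and, from the S-polynomial relations $x_i^2-x_n^2\in I_n$, the leading monomials $x_i^2$ for $i<n$ (using GRevLex with $x_1>\cdots>x_n$), I would introduce, for each $0\leq k\leq n-2$ and each $S\subseteq\{1,\ldots,n-1\}$ with $|S|=n-2-k$, the polynomial
\[ f_{k,S}=x_n^{2k+1}\prod_{i\in S} x_i-\prod_{j\in\{1,\ldots,n-1\}\setminus S} x_j, \]
whose first term has degree $n+k-1>k+1$ and hence dominates in GRevLex. This family recovers $g_i$ (for $i<n$) at $k=0$ and encodes the minimal polynomial identity $x_n^{2n-3}\equiv x_n\pmod{I_n}$ at $(k,S)=(n-2,\emptyset)$. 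To verify $f_{k,S}\in I_n$, I appeal to the reducedness of $X_n$ from Proposition~\ref{propIn}: at the origin both terms vanish, while at a non-origin point $x_i=\sigma_i\xi^{k'}$ with $\prod_i\sigma_i=(-1)^{k'}$, direct substitution together with the identity $\xi^{n-2}=-1$ shows that the two terms agree pointwise.

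Let $\tilde J_n$ be the monomial ideal generated by $\{x_i^2\}_{i<n}$, the leading terms $\{\mathop{\mathrm{in}}(f_{k,S})\}$, and $\mathop{\mathrm{in}}(g_n)=x_1\cdots x_{n-1}$. By construction $\tilde J_n\subseteq J_n$, and a short divisibility check shows that the standard monomials of $R_n/\tilde J_n$ are precisely those of the form $x_n^a\prod_{i\in S} x_i$ with $S\subseteq\{1,\ldots,n-1\}$ and $a+2|S|\leq 2n-4$. Writing $s=|S|$, the degree-$k$ count is
\[ \sum_{\substack{a+s=k\\ a+2s\leq 2n-4}}\binom{n-1}{s}=\sum_{s=0}^{\min(k,\,2n-4-k)}\binom{n-1}{s}=a_{n,k}, \]
which sums to $c_n$ by~\eqref{identity}. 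Since $\dim_F R_n/J_n=c_n$, the containment $\tilde J_n\subseteq J_n$ must be an equality, and the Hilbert series is as claimed.

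The main obstacle is the identification of the family $\{f_{k,S}\}$: the odd exponents $2k+1$ of $x_n$ and the duality $S\leftrightarrow T:=\{1,\ldots,n-1\}\setminus S$ are not immediately visible from the S-polynomial computation alone. Once these polynomials are in hand, the counting is routine binomial arithmetic. As an automatic byproduct, $\tilde J_n$ is invariant under the copy of $S_{n-1}$ permuting $x_1,\ldots,x_{n-1}$ (the $x_n$-degree and the size $|S|$ of the generators being preserved), furnishing the ``different $S_{n-1}$'' symmetry flagged in the introduction.
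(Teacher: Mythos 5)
Your proposal is correct and follows essentially the same route as the paper: the polynomials $f_{k,S}$ are exactly the paper's $q(T)$, and the argument proceeds identically by bounding the initial ideal from below, counting standard monomials $x_n^a\prod_{i\in S}x_i$ with $a+2|S|\leq 2n-4$, and closing the gap via the dimension count $c_n$. The only (valid) variation is that you verify $f_{k,S}\in I_n$ by radicality and pointwise evaluation on $X_n$, whereas the paper gives a direct algebraic induction on $k$ using $x_l^2\equiv x_n^2 \pmod{I_n}$.
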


To motivate the proof below, we first discuss the simplest non-trivial example. 

\begin{example} Continue with $n=3$, so that our inhomogeneous ideal, of colength $c_3=5$, is
\[ I_3 = \langle x_2x_3- x_1, x_1x_3  - x_2, x_1x_2  - x_3 \rangle \lhd F[x_1, x_2, x_3]. \] 
This ideal contains, alongside its generators, the elements
\begin{eqnarray*} 
x_1^2-x_3^2 &  = & x_1(-x_2x_3+x_1)+ x_3(x_1x_2-x_3), \\
x_2^2-x_3^2 &  = & x_2(-x_1x_3+x_2)+ x_3(x_1x_2-x_3), \\
x_3^3-x_3  & = & (-x_3^2+1) \cdot(x_1x_2-x_3) + x_1\cdot(x_1x_3-x_2) + x_1x_3\cdot(x_2x_3-x_1).
\end{eqnarray*} 
The initial ideal $J_3$ therefore contains the monomials
\[x_1^2, x_2^2, x_1x_2, x_1x_3, x_2x_3, x_3^3.\]
It can be checked that these elements generate an ideal of colength $5$, so we must in fact have
\[J_3=\langle x_1^2, x_2^2, x_1x_2, x_1x_3, x_2x_3, x_3^3\rangle \lhd F[x_1, x_2, x_3].\]
A homogeneous basis of $F[x_1, x_2, x_3]/J_3$ is given by $\{1, x_1, x_2, x_3, x_3^2\}$, and hence the Hilbert series of this graded $F$-algebra is $1+3t+t^2$ indeed. 
\end{example}

As in the example, we start the proof of Theorem~\ref{thm2} by exhibiting a large set of monomials in $J_n$. 
\begin{proposition} \label{prop} \ 

\begin{enumerate}
\item[{\rm (i)}] The following monomials form a generating set of the ideal $J_n$:
\begin{enumerate}
    \item[{\rm (a)}] the squares $x_i^2$ for $1\leq i \leq n-1$;
    \item[{\rm (b)}] the product $x_1x_2\ldots x_{n-1}$;
    \item[{\rm (c)}] for any $0\leq j \leq n-2$ and any subset $T\subset \{1,\ldots, n-1\}$ of size $n-2-j$, the monomial \[m(T)=x_n^{2j+1}\prod_{i\in T} x_i.\]
\end{enumerate}
\item[{\rm (ii)}] The monomials 
 \[m(T,s) = x_n^{s}\prod_{i\in T} x_i,\] 
 where $0\leq j \leq n-2$, $T\subset \{1,\ldots, n-1\}$ is any subset of size $n-2-j$, and $0\leq s\leq 2j$, form a basis of the finite-dimensional vector space $R_n/J_n$ over $F$. 
\end{enumerate}
\end{proposition}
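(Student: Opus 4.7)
The approach is to show that the monomial ideal $J'_n \lhd R_n$ generated by the monomials listed in (a)--(c) coincides with $J_n$; part (ii) then follows from a direct divisibility check combined with a dimension count based on identity~\eqref{identity}.

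The containments of several generators in $J_n$ are immediate. The polynomial $f_n = x_1 \cdots x_{n-1} - x_n \in I_n$ has GRevLex leading term $x_1 \cdots x_{n-1}$ (higher total degree than $x_n$), giving (b). The combination $x_n f_n - x_i f_i = x_i^2 - x_n^2 \in I_n$ has leading term $x_i^2$, giving (a). Finally, the case $j=0$ of (c) is immediate from $f_\ell = m(T_0) - x_\ell \in I_n$ for $T_0 = \{1,\ldots,n-1\}\setminus\{\ell\}$, whose leading term is exactly $m(T_0) = x_n \prod_{i \in T_0} x_i$.

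For the remaining entries of (c), the plan is to argue by induction on $j = n-2-|T|$, building for each admissible $T$ a polynomial $g_T \in I_n$ of the shape
\[ g_T = m(T) + \text{(terms of total degree strictly less than $\deg m(T) = n+j-1$)}, \]
so that $g_T$ has leading monomial $m(T)$ under GRevLex, and hence $m(T) \in J_n$. The base $j=0$ uses $g_{T_0} = f_\ell$ from above. For the inductive step with $j \ge 1$, pick any $k \in T^c$, set $T' := T \cup \{k\}$ (so $|T'|$ corresponds to index $j-1$ and $g_{T'}$ is already given), and write $r_k := x_k^2 - x_n^2 \in I_n$. Define
\[ g_T := x_k \, g_{T'} - x_n^{2j-1} \Bigl(\prod_{i \in T} x_i\Bigr) r_k. \]
A direct expansion shows that the degree-$(n+j-1)$ top contribution $x_k \cdot m(T') = x_n^{2j-1} x_k^2 \prod_{i \in T} x_i$ from $x_k g_{T'}$ cancels against the $x_n^{2j-1} x_k^2 \prod_{i \in T} x_i$ piece of the correction, while the remaining $x_n^{2j+1}\prod_{i \in T} x_i = m(T)$ survives. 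Everything beyond $m(T)$ equals $x_k$ times the tail of $g_{T'}$; by induction this tail has degrees strictly less than $\deg m(T') = n+j-2$, so after multiplication by $x_k$ its degrees stay strictly below $n+j-1 = \deg m(T)$. This closes the induction and yields $m(T) \in J_n$ for every $T$ appearing in (c).

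To conclude, $J'_n \subseteq J_n$ by the above, and a routine divisibility analysis shows that a monomial $x_n^s \prod_{i \in S} x_i$ (with $S \subseteq \{1,\ldots,n-1\}$) avoids $J'_n$ precisely when $|S| \le n-2$ and $0 \le s \le 2(n-2-|S|)$, i.e.\ when it is one of the $m(T,s)$ of (ii). By identity~\eqref{identity} there are $\sum_{j=0}^{n-2}(2j+1)\binom{n-1}{n-2-j} = c_n$ such monomials, so $\dim_F R_n/J'_n = c_n$. On the other hand $\dim_F R_n/J_n = c_n$ too, by flatness of the GRevLex degeneration together with Proposition~\ref{propIn}. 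The sandwich $c_n = \dim_F R_n/J'_n \ge \dim_F R_n/J_n = c_n$ forces $J'_n = J_n$, establishing (i); part (ii) is then the statement that the complementary monomials form an $F$-basis of $R_n/J_n$. The main technical hurdle is the inductive construction in the middle paragraph, where one must verify that the degree tracking survives at every level; this hinges on the residues having strictly smaller degree than their corresponding leading monomial, an invariant visible in the base case and preserved by the multiplication by $x_k$.
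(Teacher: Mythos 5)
Your proposal is correct and follows essentially the same route as the paper's proof: exhibit each listed monomial as the GRevLex leading term of an explicit element of $I_n$ (with the family (c) handled by induction on $j$, adding one variable at a time and using $x_k^2-x_n^2\in I_n$), then identify the complementary standard monomials and close the argument by comparing their count, via identity~\eqref{identity}, with $\dim_F R_n/J_n=c_n$. The only difference is cosmetic: the paper tracks the explicit binomial $q(T)=m(T)-\prod_{i\notin T}x_i$ through the induction, whereas you only track ``$m(T)$ plus strictly lower-degree terms,'' which suffices since GRevLex refines total degree.
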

\begin{proof} We first argue that the elements listed under (i)(a)-(c) are contained in the ideal $J_n$.
For elements in (a), it is enough to note that such a square is the leading term of the element 
\[ x_i^2-x_n^2  = x_i\left(x_i-\prod_{i\neq j}x_j\right) - x_n \left(x_n-\prod_{n\neq j}x_j\right)\in I_n.
\]
The element in (b) is the leading term of one of the original generators of $I_n$. Finally if $0\leq j\leq n-2$ and $T\subset \{1,\ldots, n-1\}$ is any subset  of size $n-2-j$, we claim that the polynomial 
\[ q(T)=x_n^{2j+1}\prod_{i\in T} x_i - \prod_{i\notin T} x_i\]
is in the ideal $I_n$. This would indeed prove that the element $m(T)$ in (c), the leading term of the polynomial $q(T)$, is also in $J_n$. 
For $j=0$, this claim is clearly true, as the expression $q(T)$ is again one of the original generators. We then proceed by induction on $j$: if $T=T'\cup\{l\}$, then by the induction assumption $q(T)\in I_n$, and so 
\[ \prod_{i\notin T} x_i \equiv x_n^{2j+1}\prod_{i\in T} x_i \mod I_n. 
\]
Hence
\begin{eqnarray*} \prod_{i\notin T'} x_i& \equiv & x_l\cdot\prod_{i\notin T} x_i \mod I_n\\ & \equiv & x_l\cdot x_n^{2j+1}\prod_{i\in T} x_i \mod I_n\\ &= &x_l^2\cdot x_n^{2j+1}\prod_{i\in T'} x_i \\ &\equiv& x_n^{2j+3}\prod_{i\in T'} x_i \mod I_n,
\end{eqnarray*}
where in the last congruence, we used the fact that $x_l^2-x_n^2\in I_n$. Hence indeed $q(T')\in I_n$.

We next claim that the elements listed in (ii) form a spanning set of the vector space  $R_n/J_n$. This is just the contrapositive of what we already proved for (i). The $F$-algebra $R_n$ is spanned by all monomials as an $F$-vector space. In the quotient $R_n/J_n$, we do not need the generators which are multiples of any of the monomials listed in (i). A quick reflection shows that we end up with the list of monomials as stated. 

To conclude the proof, it remains to do a count. The number of monomials listed in (ii) is
\[\sum_{j=0}^{n-2} (2j+1) {n-1 \choose n-2-j} = 1 + (n-2)\cdot 2^{n-1}\]
by~\eqref{identity}. This number is precisely the dimension of the finite dimensional $F$-vector space $R_n/J_n$. Hence the monomials $\{m(T,s)\}$ must form a basis of this quotient space as claimed. Correspondingly, the monomials listed in (i) must form a set of generators of the ideal~$J_n$.
\end{proof}

\begin{proof}[Conclusion of the proof of Theorem~\ref{thm2}]  We have already exhibited a monomial basis for the vector space $R_n/J_n$ in Proposition~\ref{prop}(ii). The degree of each basis element is easy to see: 
\[\deg m(T,s) = |T|+s =n-2-j+s.\]
Hence, substituting $k=n-2-j+s$ and $l=n-2-j$ in the second and third lines, we get 
\begin{eqnarray*}
h_n(t) & = & \sum_{j=0}^{n-2} \sum_{s=0}^{2j} {n-1\choose n-2-j} t^{n-2-j+s} \\
& = & \sum_{k=0}^{2n-4}t^k\sum_{j=|n-2-k|}^{n-2} {n-1\choose n-2-j} \\
& = & \sum_{k=0}^{2n-4}t^k\sum_{l=0}^{\min(k,2n-4-k)} {n-1\choose l} \\
& = & \sum_{k=0}^{2n-4} t^k a_{n,k}
\end{eqnarray*}
as claimed. 
\end{proof}


To introduce our final result in this section, note the following. In the list of monomial generators of $J_n$ given in Proposition~\ref{prop}(i), the variable $x_n$ clearly plays a distinguished role, but the other variables arise completely symmetrically. This is unexpected, as the term order we used, indeed any term order, is asymmetric with respect to permutations of the variables. Be as it may, it is clear from our results that the homogeneous ideal $J_n$ is invariant under the symmetric group $S_{n-1}$ permuting the first $(n-1)$ variables. Hence the quotient $F$-algebra $R_n/J_n$ becomes a graded finite-dimensional representation of the group $S_{n-1}$. 
\begin{theorem}\label{thm3} For $n\geq 2$, the graded character of the Artinian $F$-algebra $R_n/J_n$ is given by
\[ \chi_t\left(R_n/J_n \right) = \sum_{k=0}^{2n-4}t^k\sum_{l=0}^{\min(k,2n-4-k)} [\cP_{l}(n-1)] \in \Rep(S_{n-1})[t].
\]
\end{theorem}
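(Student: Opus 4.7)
The plan is to recycle the monomial basis from Proposition~\ref{prop}(ii), observe that it is manifestly $S_{n-1}$-stable, and decompose it degree by degree into permutation representations on subsets of $\{1,\ldots,n-1\}$.

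First I would note that the $S_{n-1}$-action on $R_n$ permuting $x_1,\ldots,x_{n-1}$ fixes $x_n$ and sends the monomial $m(T,s) = x_n^s\prod_{i\in T}x_i$ to $m(\sigma(T),s)$ for any $\sigma\in S_{n-1}$. Hence the basis $\{m(T,s)\}$ from Proposition~\ref{prop}(ii) is permuted by $S_{n-1}$, confirming the $S_{n-1}$-invariance of $J_n$ stated just before the theorem and realising $R_n/J_n$ as a permutation representation with an explicit basis.

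Next I would fix a degree $k$ and identify the $k$-th graded piece. Since $\deg m(T,s) = |T|+s$, writing $l=|T|=n-2-j$, the pairs contributing to degree $k$ are exactly those with $l+s=k$, $0\leq s\leq 2j=2(n-2-l)$, and $0\leq l\leq n-2$. A direct check shows these two inequalities are equivalent to $0\leq l\leq \min(k,2n-4-k)$, matching the summation bound in the claimed identity and in the third line of the proof of Theorem~\ref{thm2}. For each admissible $l$, the value $s=k-l$ is determined, and the $S_{n-1}$-orbit on the monomials with $|T|=l$ is clearly isomorphic to $[\cP_l(n-1)]$, since $S_{n-1}$ acts on the set of $l$-subsets of $\{1,\ldots,n-1\}$ by its standard permutation action. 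Summing over $l$ gives the required character formula for $\chi_t(R_n/J_n)$.

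There is really no substantive obstacle: all the analytic work has already been done in Proposition~\ref{prop}, and Theorem~\ref{thm3} is a representation-theoretic upgrade of the dimension count in the proof of Theorem~\ref{thm2}. The only thing to double-check is that taking dimensions in the claimed identity recovers the Hilbert series from Theorem~\ref{thm2}; but $\dim[\cP_l(n-1)]=\binom{n-1}{l}$, and the inner sum $\sum_{l=0}^{\min(k,2n-4-k)}\binom{n-1}{l}$ is precisely $a_{n,k}$ by the definition of the symmetrised Bernoulli triangle, providing a consistency check for the argument.
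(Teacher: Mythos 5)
Your proposal is correct and follows exactly the paper's route: it takes the monomial basis $\{m(T,s)\}$ from Proposition~\ref{prop}(ii), observes that for fixed $s$ and $j$ it is an $S_{n-1}$-set isomorphic to $\cP_{n-2-j}(n-1)$, and then repeats the index manipulation from the end of the proof of Theorem~\ref{thm2} in $\Rep(S_{n-1})[t]$. The only difference is that you spell out the bound $0\leq l\leq\min(k,2n-4-k)$ and the dimension consistency check explicitly, which the paper leaves implicit.
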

\begin{proof} Recall that the monomials $\{m(T,s)\}$ from Proposition~\ref{prop}(ii) form a basis of the quotient. For fixed $s$ and $j$, the set $\{m(T,s)\}$ is an $S_{n-1}$-set
in bijection with $\cP_{n-2-j}(n-1)$. The result then follows by repeating in $\Rep(S_{n-1})[t]$ the calculation used at the end of the proof of Theorem~\ref{thm2}. 
\end{proof}

\section{The homogeneous specialisation}\label{sec4}

Suggested by Joachim Jelisiejew, in this section we study the homogeneous specialisation of~$I_n$, the ideal defined by highest degree terms of all elements of $I_n$. In other words, we consider the initial ideal 
\[ K_n = \mathop{\mathrm{in}_{\mathrm{Gr}}} I_n\lhd R_n\]
defined by the partial order given by polynomial degree. This is a flat specialisation of~$I_n$, still of finite codimension $c_n$. In fact, as $K_n$ is a graded specialisation of $J_n$, the quotient $R_n/K_n$ has the same Hilbert series
\[h_n(t) = \sum_{k=0}^{2n-4} t^k a_{n,k}\]
as $R_n/J_n$, known from Theorem~\ref{thm2}.

\begin{proposition}\label{prop_gen} For $n\geq 3$, the homogeneous specialisation $K_n$ of our ideal $I_n$ can be given explicitly by generators as
\[ K_n = \left( x_1^2-x_n^2,\ldots, x_{n-1}^2-x_n^2, x_2x_3\cdots x_n, x_1x_3\cdots x_n, \ldots, x_1x_2\cdots x_{n-1}\right).
\]
\end{proposition}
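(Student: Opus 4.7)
The plan is to denote the ideal on the right hand side by $L_n$ and to establish $L_n = K_n$ by the standard two-step argument: first verify $L_n \subseteq K_n$, and then show $\dim_F R_n/L_n \leq c_n$. Since $K_n$ is a flat specialisation of $I_n$ and hence has colength exactly $c_n$, this second bound will force equality of the two ideals. I expect the first step to be a routine verification on generators, with the main obstacle being the dimension bound in the second step.

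For the inclusion $L_n \subseteq K_n$, each of the $n$ degree-$(n-1)$ generators $\prod_{j\neq i} x_j$ is the top-degree part of the original generator $f_i = \prod_{j\neq i} x_j - x_i$ of $I_n$ and therefore lies in $K_n$ by definition. For the quadratic generators, a short computation yields
\[ x_i f_i - x_n f_n \;=\; x_n^2 - x_i^2 \,\in\, I_n \qquad (1 \leq i \leq n-1), \]
and since this element is already homogeneous of degree~$2$, it coincides with its own top-degree part, so $x_i^2 - x_n^2 \in K_n$ as required.

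For the dimension bound, the plan is to show that every monomial in $R_n$ reduces modulo $L_n$ to one of the monomials $m(T,s)$ of Proposition~\ref{prop}(ii), whose total count is exactly $c_n$ by identity~\eqref{identity}. The quadratic relations $x_k^2 \equiv x_n^2 \pmod{L_n}$ for $k<n$ reduce any monomial to the form $x_n^s \prod_{k \in T} x_k$ with $T \subseteq \{1,\ldots,n-1\}$ and $s \geq 0$. If $T = \{1,\ldots,n-1\}$ the monomial vanishes modulo $L_n$ since $x_1 \cdots x_{n-1}$ is itself a generator; otherwise, writing $|T| = n-2-j$ with $0 \leq j \leq n-2$ and setting $S = \{1,\ldots,n-1\}\setminus T$, I pick any $j$-subset $S' \subseteq S$ with unique complementary element $\ell \in S \setminus S'$, and transfer $2j$ copies of $x_n^2$ into $\prod_{k \in S'} x_k^2$ to obtain the congruence
\[
x_n^s \prod_{k \in T} x_k \;\equiv\; x_n^{s-2j}\Bigl(\prod_{k \in S'} x_k\Bigr) \prod_{\substack{k<n\\ k\neq \ell}} x_k \pmod{L_n},
\]
valid whenever $s \geq 2j$. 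When $s \geq 2j+1$, a remaining factor of $x_n$ completes the product $\prod_{k\neq \ell} x_k$ into the generator $\prod_{k \neq \ell} x_k$ of $L_n$, so the monomial reduces to zero. Thus only the monomials $m(T,s)$ with $0 \leq s \leq 2j$ survive as a spanning set, giving $\dim_F R_n/L_n \leq c_n$ and concluding the argument. The combinatorial bookkeeping parallels that in the proof of Proposition~\ref{prop}, and the key observation is simply that the quadratic and product generators of $L_n$ already encode enough of the structure of the monomial ideal $J_n$ to force the same colength.
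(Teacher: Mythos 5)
Your proof is correct and follows essentially the same route the paper intends: verify that the listed generators lie in $K_n$ as top-degree forms of elements of $I_n$, then bound $\dim_F R_n/L_n$ by $c_n$ via the same reduction to the monomials $m(T,s)$ used in Proposition~\ref{prop}, so that the surjection $R_n/L_n \twoheadrightarrow R_n/K_n$ forces equality. The paper only sketches this argument; your version supplies the explicit reduction, and the details check out.
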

\begin{proof} It is clear from formulae in the previous section or direct observation that all the listed generators are contained in $K_n$. Arguments similar to those used to prove Proposition~\ref{prop} then show that the two ideals are equal.  
\end{proof}

The principal difference between the ideals $J_n$ and $K_n$ is the content of the following theorem, whose proof, due to Stavros Argyrios Papadakis, is included in the Appendix. This result explains the palindromic symmetry of the Hilbert series $h_n(t)$.

\begin{theorem}\label{thmG} The Artinian graded $F$-algebra $R_n/K_n$ of codimension $n$ is Gorenstein. 
\end{theorem}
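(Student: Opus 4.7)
The plan is to construct a non-degenerate symmetric $F$-bilinear form on $R_n/K_n$ of the form $B(u,v) = \phi(uv)$ for a suitable linear functional $\phi$; for a graded Artinian local $F$-algebra, the existence of such a form is equivalent to the Gorenstein condition. I would first pass to the intermediate algebra $A = R_n/(x_i^2 - x_n^2 : 1 \le i \le n-1)$ and set $z = x_n^2 \in A$. Since every $x_i^2$ reduces to $z$ in $A$, the algebra is free as an $F[z]$-module with basis $\{x_T := \prod_{i \in T} x_i : T \subseteq \{1, \ldots, n\}\}$, equipped with the Clifford-like multiplication $x_T \cdot x_{T'} = z^{|T \cap T'|} x_{T \triangle T'}$. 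By Proposition~\ref{prop_gen}, $R_n/K_n = A/J$, where $J \lhd A$ is the ideal generated by $e_i = x_{\{1,\ldots,n\} \setminus \{i\}}$ for $1 \le i \le n$.

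The key technical step is to verify that $z^m x_T \in J$ whenever $m + |T| \ge n - 1$. For $T \ne \{1,\ldots,n\}$, I would pick any $i \notin T$ and use the multiplication rule in $A$ to check the explicit identity
\[ x_{\{1,\ldots,n\} \setminus (T \cup \{i\})} \cdot e_i = z^{n-1-|T|} x_T, \]
which handles the case $m = n-1-|T|$; all larger $m$ follow by multiplying by further powers of $z$, and the case $T = \{1,\ldots,n\}$ is absorbed into $x_{\{1,\ldots,n\}} = x_i \cdot e_i$. Combined with the dimension $\dim_F R_n/K_n = c_n$ (inherited from Theorem~\ref{thm2} via flatness) and the elementary identity $\sum_{t=0}^{n-2}(n-1-t)\binom{n}{t} = 1 + (n-2) 2^{n-1}$, this forces $\{z^m x_T : m + |T| \le n-2\}$ to be an $F$-basis of $R_n/K_n$.

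With the basis in hand, I would define $\phi: R_n/K_n \to F$ by $\phi(z^{n-2}) = 1$ and $\phi = 0$ on every other basis vector, and set $B(u,v) = \phi(uv)$. On basis elements,
\[ B(z^m x_T,\, z^{m'} x_{T'}) = \phi(z^{m+m'+|T \cap T'|} x_{T \triangle T'}), \]
which equals $1$ exactly when $T = T'$ and $m + m' = n - 2 - |T|$, and vanishes otherwise. Hence $B$ is block-diagonal, indexed by the subsets $T \subseteq \{1, \ldots, n\}$, with each block an $(n-1-|T|) \times (n-1-|T|)$ anti-diagonal identity matrix. Each block is manifestly non-degenerate, so $B$ is non-degenerate, proving that $R_n/K_n$ is Gorenstein with one-dimensional socle spanned by $z^{n-2} = x_n^{2n-4}$.

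The main obstacle is the technical second step: producing the explicit presentation of $z^{n-1-|T|} x_T$ as an $A$-multiple of some $e_i$ and reconciling the monomial count $\sum_{t=0}^{n-2}(n-1-t)\binom{n}{t}$ with the known dimension $c_n$. Once that basis is secured, the Clifford-like multiplication in $A$ cleanly decouples the pairing $B$ across the subsets $T$, making the final non-degeneracy check a transparent block-diagonal computation.
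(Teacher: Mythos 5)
Your proof is correct, but it takes a genuinely different route from the one in the paper. The paper's Appendix proceeds by induction on $n$ via Kustin--Miller unprojection: it realises $K_n$ as the specialisation $z=x_n$ of an unprojection ideal $Q_n$ built from the pair $L_{n-1}\subset K_{n-1}$, invokes the general theory to get that $R_n[z]/Q_n$ is Gorenstein of dimension $1$, and cuts by the regular element $z-x_n$. That machinery works well beyond the Artinian setting. Your argument is instead a direct, self-contained Frobenius-form computation: you exhibit an explicit $F$-basis $\{z^m x_T : m+|T|\le n-2\}$ of $R_n/K_n$, check the Clifford-like multiplication rule, and show the pairing $\phi(uv)$ with $\phi$ dual to $z^{n-2}=x_n^{2n-4}$ is block-anti-diagonal and non-degenerate. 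This is essentially a fully worked-out version of the alternatives the paper only sketches in its Remark (Reid's one-dimensional-socle argument, Jelisiejew's inverse systems), and it buys more: it identifies the socle generator explicitly, re-derives the Hilbert series of Theorem~\ref{thm2} from the basis, and makes the duality underlying the palindromic symmetry of $h_n(t)$ visible. All the individual verifications check out: the identity $x_{\{1,\ldots,n\}\setminus(T\cup\{i\})}\cdot e_i=z^{n-1-|T|}x_T$ is correct, and $\sum_{t=0}^{n-2}(n-1-t)\binom{n}{t}=1+(n-2)2^{n-1}$ does hold. The only point you should not leave implicit is the freeness of $A=R_n/(x_i^2-x_n^2)$ over $F[z]$ with basis $\{x_T\}$: spanning is clear from reducing exponents mod $2$, but linear independence requires noting that the $n-1$ quadrics form a regular sequence (their vanishing locus is the one-dimensional union of lines $x_i=\pm x_n$), so the Hilbert series of $A$ is $(1+t)^{n-1}/(1-t)$, which matches the generating function of the proposed basis degree by degree. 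With that remark added, the proof is complete.
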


\begin{remark} The proof in the Appendix uses the Kustin--Miller theory of unprojections, which works in more general circumstances also. In our Artinian situation, there are other possible proofs. Miles Reid showed an argument to the authors that proves directly that the socle of $R_n/K_n$ is one-dimensional; it is well known that for graded Artinian rings, this implies the Gorenstein property. Joachim Jelisiejew pointed out that one can also use Macaulay inverse systems~\cite[Sect.21.2]{eis}. In this approach, the ideal $K_n$ corresponds to the inverse system generated by
\[  g_n = \sum_{k\in {\mathcal K}} m_k^2, 
\]
where $\{m_k\colon k\in {\mathcal K}\}$ is the set of all monomials of degree $(n-2)$ in the variables $y_i$ dual to the original variables~$x_i$.
For example, 
\[
g_3 = y_1^2 + y_2^2 + y_3^2
\]
and
\[
g_4 =y_1^4+y_1^2y_2^2+y_2^4+y_1^2y_3^2+y_2^2y_3^2+y_3^4+y_1^2y_4^2+y_2^2y_4^2+y_3^2y_4^2+y_4^4.
\]
Since $K_n$ has a dual socle generated by a single element $g_n$, the quotient $R_n/K_n$ is Gorenstein~\cite[Thm.21.6]{eis}. 

\end{remark}

\begin{remark} It is easy to see that the graded Artinian $F$-algebra $R_n/J_n$, defined by the monomial ideal $J_n$ from the previous section, is not Gorenstein for $n>2$.  
\end{remark}

To conclude the paper, let us note that the ideal $K_n$, defined without any reference to an order between the variables, retains the full $S_n$-symmetry of the inhomogeneous ideal $I_n$. This is also clear from Proposition~\ref{prop_gen}: including all differences of squares presents an $S_n$-equivariant set of generators for $K_n$. We obtain a graded Hilbert series 
\[ \tilde h_n(t) = \chi_t\left(R_n/K_n\right)\in \Rep(S_n)[t],
\]
a further representation-theoretic refinement of the combinatorial quantity $c_n$. Setting $t=1$ in $\tilde h_n(t)$ returns the formula of Theorem~\ref{thm1}, whereas considered non-equivariantly, we get back the formula of Theorem~\ref{thm2}. 

\begin{challenge} Compute the series $\tilde h_n(t)\in \Rep(S_n)[t]$. 
\end{challenge}

\section*{Appendix by Stavros Argyrios Papadakis: the proof of Theorem~\ref{thmG}}

For $n=2$, the statement is obvious. 

For $n \geq 3$, the proof is by induction on $n$.   
(We remark that, alternatively,  for $n=3$, it is easy to check using Macaulay2~\cite{m2} that $R_3/K_3$ is Gorenstein, 
or construct a resolution by hand in $5\times 5$ Pfaffian format.)
We set $R_{n-1}=F[x_1,\ldots, x_{n-1}]$. For $1\leq i \leq n-2$, 
let $h_i =  x_i^2 - x_{n-1}^2$. For $1\leq i \leq n-1$, let $p_i =  x_1\cdots x_{i-1}x_{i+1}\cdots x_{n-1}$.  We have 
\[ K_{n-1}   =  ( h_1, h_2,  ... , h_{n-2} ,   p_1,   ... , p_{n-1})    \lhd R_{n-1} \]
and  define          
\[L_{n-1}  =  ( h_1, h_2,  ... , h_{n-2} ,   x_1x_2\cdots x_{n-1}) \lhd R_{n-1}. \]
Then clearly $L_{n-1}  \subset  K_{n-1}$. An obvious primary decomposition argument gives that $L_{n-1}$ is a 
complete intersection ideal,   and that  both
quotient rings  $ R_{n-1}/L_{n-1}$ and $ R_{n-1}/K_{n-1}$ have the same Krull dimension $0$. 
Moreover,  $R_{n-1}/K_{n-1}$ is Gorenstein by the inductive hypothesis.  Thus 
the pair $L_{n-1} \subset  K_{n-1}$ is a predata for a degenerate Kustin-Miller unprojection in the sense of \cite[Section 4]{APP}. 

We consider the graded $R_{n-1}$-module    $P_{n-1} = (L_{n-1}  :  K_{n-1})/L_{n-1}$.

\renewcommand{\thetheorem}{A.1}
\begin{proposition}\label{prop_firsofappendix}  
The $R_{n-1}$-module    $P_{n-1}$ is cyclic, generated by the element  $ x_{n-1}^2~+~L_{n-1} $.
\end{proposition}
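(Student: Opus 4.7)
The plan has three parts: check by direct computation that $x_{n-1}^2 \in (L_{n-1} : K_{n-1}) \setminus L_{n-1}$, so that $[x_{n-1}^2]$ is a nonzero degree-two class in $P_{n-1}$; invoke Gorenstein duality together with the inductive hypothesis on $R_{n-1}/K_{n-1}$ to see that $P_{n-1}$ is cyclic with a distinguished generator in degree two; and observe that these two facts force $[x_{n-1}^2]$ to be that generator up to a nonzero scalar.

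For the concrete step, $x_{n-1}^2 \cdot h_i \in (h_i) \subset L_{n-1}$ is immediate for $1 \leq i \leq n-2$, and $x_{n-1}^2 \cdot p_{n-1} = x_{n-1}(x_1 \cdots x_{n-1}) \in L_{n-1}$. For $1 \leq i \leq n-2$, the key congruence $x_{n-1}^2 \equiv x_i^2 \pmod{h_i}$ gives
\[
x_{n-1}^2 \cdot p_i \;\equiv\; x_i^2 \cdot p_i \;=\; x_i \cdot (x_1 x_2 \cdots x_{n-1}) \pmod{L_{n-1}},
\]
and the right-hand side lies in $L_{n-1}$. Hence $x_{n-1}^2 \in (L_{n-1} : K_{n-1})$. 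For the non-membership, the degree-two component $(L_{n-1})_2$ is spanned by $\{h_1, \ldots, h_{n-2}\}$ (augmented by $x_1x_2$ in the boundary case $n=3$), and $x_{n-1}^2$ is visibly not an $F$-linear combination of these.

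For the cyclicity step, the $n-1$ generators of $L_{n-1}$ form a regular sequence in $R_{n-1}$, as observed just above the proposition, so $R_{n-1}/L_{n-1}$ is a graded Artinian complete intersection, hence Gorenstein, with socle degree
\[
s_L \;=\; (n-2)(2-1) + ((n-1)-1) \;=\; 2n-4.
\]
The inclusion $L_{n-1} \subset K_{n-1}$ then yields a graded isomorphism $P_{n-1} = (L_{n-1}:K_{n-1})/L_{n-1} \cong \omega_{R_{n-1}/K_{n-1}}$ of $R_{n-1}/K_{n-1}$-modules, up to a shift determined by $s_L$. By the inductive hypothesis $R_{n-1}/K_{n-1}$ is itself Artinian graded Gorenstein; by Theorem~\ref{thm2} applied to $n-1$, its socle degree is $s_K = 2(n-1)-4 = 2n-6$. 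Gorenstein-ness means $\omega_{R_{n-1}/K_{n-1}}$ is a rank-one free $R_{n-1}/K_{n-1}$-module, whose generator, after accounting for the shift, sits in degree $s_L - s_K = 2$.

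Thus $P_{n-1}$ is cyclic with generator in degree two; its degree-two component is therefore one-dimensional over $F$, and the nonzero class $[x_{n-1}^2]$ must agree with the generator up to a scalar. The main obstacle I anticipate is the careful tracking of the grading shift in the Gorenstein duality isomorphism, to confirm that the generator really does land in degree two. An alternative that sidesteps this is to construct the generator of $P_{n-1}$ by hand through the Kustin--Miller unprojection framework of \cite[Section 4]{APP}, at the cost of setting up more apparatus.
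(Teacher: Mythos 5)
Your proof is correct, and it reaches the conclusion by a genuinely different route for the structural part of the argument. The concrete half coincides with the paper's: both proofs verify directly that $x_{n-1}^2\in (L_{n-1}:K_{n-1})\setminus L_{n-1}$, and your congruences $x_{n-1}^2p_i\equiv x_i\cdot(x_1\cdots x_{n-1})\pmod{h_i}$ are exactly the right computation. Where you diverge is on cyclicity: the paper simply quotes \cite[Theorem~4.8]{APP} for the fact that $P_{n-1}$ is cyclic, and then pins down the degree of the generator by elimination --- exhibiting the nonzero degree-two class $[x_{n-1}^2]$ (which rules out a generator in degree $\geq 3$) and checking by hand that no homogeneous element of degree $\leq 1$ outside $L_{n-1}$ lies in the colon ideal. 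You instead derive both cyclicity and the degree of the generator at once from the linkage isomorphism $(L_{n-1}:K_{n-1})/L_{n-1}\cong\omega_{R_{n-1}/K_{n-1}}$ (valid since $L_{n-1}$ is a complete intersection and $R_{n-1}/K_{n-1}$ is Artinian Gorenstein by induction), together with the socle-degree bookkeeping $s_L=2n-4$ from the complete-intersection degrees and $s_K=2n-6$ from Theorem~\ref{thm2}, placing the generator in degree $s_L-s_K=2$ a priori. Your route is more self-contained relative to \cite{APP} --- it is essentially the proof of the cited theorem specialised to this situation --- and it has the advantage of computing the generator's degree directly rather than by exclusion; the paper's version is shorter because it black-boxes the duality, and it avoids the grading-shift bookkeeping you rightly flag as the delicate point. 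Both arguments are complete and compatible with the inductive framework of the Appendix.
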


\begin{proof}  The module  $P_{n-1}$  is cyclic by   \cite[Theorem~4.8]{APP}. 
It is graded, where each variable $x_i$ 
has degree $1$.  It is clear that    $x_{n-1}^2  \in (L_{n-1}  :  K_{n-1})$  and that $x_{n-1}^2$ is not
an element of $L_{n-1}$.  Moreover, it is easy to see that if $ 0 \not= u \in  R_{n-1} \setminus  L_{n-1}$ 
is homogeneous of  degree $1$  then u is not an element of $(L_{n-1}  :  K_{n-1})$. The proposition 
follows.  
\end{proof}

Assume that $z$ is a new variable.   Combining Proposition~\ref{prop_firsofappendix}  
with  \cite[Theorem~4.8]{APP} we have that  the Kustin-Miller unprojection ideal of the pair
\[(K_{n-1}, z)/(L_{n-1})  \subset   R_{n-1}[z] / (L_{n-1})\]
is the ideal     
\[ Q_n  =   L_{n-1}  +    ( x_np_1,  ...    , x_np_{n-1},  x_nz - x_{n-1}^2)\lhd R_{n-1}[x_n,z]\cong R_n[z],\]
where we denoted the new variable by $x_n$ instead of the more traditional $T$.
By the general theory of Kustin-Miller unprojection (see, for example, the brief survey  in  \cite[p.~21, Appendix]{APP})
$R_n[z]/Q_n$ is a graded Gorenstein algebra of Krull dimension $1$.

If we substitute $x_n$ for $z$ in $Q_n$, we get the ideal            
\[
    K_n = (x_1^2-x_{n-1}^2,\ldots, x_{n-2}^2-x_{n-1}^2, x_{n}^2-x_{n-1}^2, x_2\cdots x_n, x_1x_3\cdots x_n, \ldots, x_1\cdots x_{n-1})\lhd R_n,
\]
which, as observed above, is an Artinian ideal of $R_n$.  Thus the Krull dimension of the algebra  $R_n[z]/(Q_n , z-x_n)$ is $0$. Since
$R_n[z]/Q_n$ is Gorenstein, hence Cohen-Macaulay,  we get that 
the homogeneous element $z - x_n \in  R_n[z]$ is $R_n[z]/Q_n$-regular.
This implies that  $ R_n[z]/(Q_n, z-x_n)$ is Gorenstein, and since 
\[
       R_n/K_n \cong   R_n[z]/(Q_n, z-x_n)
\]
it follows that    $R_n/K_n$ is Gorenstein, completing the proof of the inductive step.

\end{document}